\newtheorem {theorem} {Theorem} [section]
\newtheorem {lemma} {Lemma} [section]
\newtheorem{preremark}{Remark}[section]
\newenvironment{remark}%
  {\begin{preremark}\rm}{\end{preremark}}
   \newtheorem{preremark1}{Example}[section]
\newcommand {\R} {\mathbb{R}}
\def\be{\begin{equation}}
\def\ee{\end{equation}}
\begin{document}

\title[Flashing ratchet]{The flashing ratchet and unidirectional transport of matter}

\author[D.Vorotnikov]{Dmitry Vorotnikov}

\address{CMUC, Department of
Mathematics, University of Coimbra, 3001-454 Coimbra, Portugal}{}
\email{mitvorot@mat.uc.pt}

\thanks{The author would like to thank David Kinderlehrer for drawing his attention to the problem and Michal Kowalczyk and Ivan Yudin for discussions. The research was partially supported by CMUC/FCT and CMU-Portugal Program.}

\keywords{flashing ratchet, Brownian motor, Fokker-Planck equation, periodic solution, Markov chain, transport}
\subjclass[2010]{35B10; 35Q84; 60J10; 60J70; 82C70}

%\infonum{10}{03}

%\date{January 22, 2010}

\begin{abstract}
We study the flashing ratchet model of a Brownian motor, which consists in cyclical switching between the Fokker-Planck equation with an asymmetric ratchet-like potential and the pure diffusion equation. We show that the motor really performs unidirectional transport of mass, for proper parameters of the model, by analyzing the attractor of the problem and the stationary vector of a related Markov chain.
\end{abstract}

\maketitle

\section {Introduction}

Nano-scale or molecular devices which use energy but not momentum to generate transport are called \emph{Brownian motors}. Such phenomena arise in various areas of science, from intracellular transport to nanotechnology \cite{ast,ast1,bras,pet,bm}.

The general relation for various types of fluctuation-driven motors looks like \cite{ast}
\be \label{eqi} \rho_t= \sigma\rho_{xx}+ (\Psi_x\rho)_x,\quad
 x\in (0,1); t>0. \ee

Here $\rho$ is the unknown density, $\sigma$ is the diffusion coefficient, and $\Psi(x,t)$ is the potential. For the \emph{flashing ratchet}, 
an autonomous potential $\psi$ is switched on and off cyclically \cite{ast}, i.e. \cite{kind1,kind2} \be\label{pot}\Psi(x,t)=h(t)\psi(x), \ee where
\be \label{h} h(t)=\left\{\begin{array}{ll} 1,
& nT<t\leq nT+T_{tr},\ n=0,1,\dots,\\
0, & nT+T_{tr}<t\leq nT+T,\ n=0,1,\dots
\end{array}\right.\ee
A typical ratchet-like potential $\psi$ with $k$ teeth, $k>1$, is $1/k$-periodic in $x$ and has a unique local (and, hence, global) minimum within each period.

In \cite{kind1} it was shown that the behaviour of the flashing ratchet system (with Neumann boundary conditions) is in some sense close to the behaviour of a certain Markov chain. It was observed that having this at hand it is possible to verify transport via comparing eventual distribution of mass between the ''wells'' of the potential $\psi$, i.e. the line segments with end points at successive maxima of $\psi(x)$. Any inequality in this distribution would mean transport. In particular, it was shown that exactly this takes place for proper parameters of the ratchet and $k=2$. However, the proof was not completely consistent, being based on incorrect time asymptotics of the second derivative of the Green function (a power function instead of an exponential one). Generalization of this claim to the case $k>2$ was mentioned as an open problem in \cite{kind2,kind1}. 

In this paper, we give evidence of unidirectional transport for any $k>1$.  Namely, we show (Theorem \ref{mainthm}) that for certain parameters of the flashing ratchet, after a sufficiently large number of cycles the amount of mass in the wells of the potential is strongly decreasing/increasing from the left to the right, provided the minima of $\psi(x)$ are located in the left/right halves of the wells. 

It is important to note that the transport provided by the flashing ratchet is due to flashing \eqref{h} only, since both pure diffusion ($h\equiv 0$) and ''perpetual ratchet'' ($h\equiv 1$) with a  periodic potential rapidly approach their equilibria without any specific right or left drift tendency. 

Let us also recall that there is a connection (see e.g. \cite{ameng,heath,kind1}) between the flashing ratchet, especially the fact that it produces unidirectional transport, and Parrondo's paradox in game theory, where a well-scheduled alternation in playing two fair or losing games becomes a winning strategy.  

The paper is organized as follows. In the next section, we present the problem more rigorously, give necessary notations and facts, and formulate the main result (Theorem \ref{mainthm}). In the third section, we demonstrate that the so-called discrete ratchet, which generates the Markov chain, behaves in a way similar to the claimed behaviour of the flashing ratchet. The proof of the main result is provided in the final section.

\section {Preliminaries}

We consider the boundary value problem for the flashing ratchet equation with Neumann boundary conditions:

\be \label{eq}
\left\{\begin{array}{ll} \rho_t= \sigma\rho_{xx}+ h(t)(b\rho)_x,
& x\in (0,1); t>0,\\
\sigma\rho_{x}+ h(t)b\rho=0,& x=0,1; t>0,\\
\rho\geq 0,\ \int\limits_0^1 \rho(x,t) dx =1,& t>0. 
\end{array}\right.
\ee

Here $b$ is the $x$-derivative of the potential $\psi$ and $h$ is given by \eqref{h}. The ratchet phase time periods are of length $T_{tr}$, and the pure diffusion periods are of length $T_{diff}=T-T_{tr}$. We denote $$\tau=\sigma T_{diff}.$$

Following \cite{kind1, heath}, the potential $\psi(x)$ is assumed to be a $C^4$-smooth function on $[0,1]$ of period $1/k$, with $k>1$ being a fixed integer, having maxima at points $x_i$ and minima at points $a_i$ and being monotone between these points (a ratchet-like form), where \be x_i=\frac {i-1} k,\ i=1,\dots, k+1,\ee \be a_i= a+x_i,\ i=1,\dots, k.\ee The positive parameter $a$ should be less than $1/k$. 

The problem can be completed with the initial condition
\be  \label{in}
\rho(x,0) = \rho_0(x),\quad x\in (0,1),\ee such that \be
\rho_0(x)\geq 0, \int\limits_0^1 \rho_0(x) dx =1.
\ee

The existence of a periodic orbit for \eqref{eq} is provided by
\begin{theorem} \label{exis}   (see \cite[Theorem 1]{kind1}) Assume that  \be2\pi^2 \tau -\lambda T_{tr}> \ln 2, \ee where $\lambda$ is a certain constant depending only on the potential (see \cite{kind1} for its exact value).  Then problem \eqref{eq} has a unique $T$-periodic solution $\rho^s$.  \end{theorem}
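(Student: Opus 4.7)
My plan is to realise $\rho^s$ as the unique fixed point of the one-period Poincaré map and to obtain it by a contraction-mapping argument whose contraction constant is precisely what the hypothesis $2\pi^2\tau-\lambda T_{tr}>\ln 2$ controls.

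First I would set up the functional framework. Let
\[ \mathcal{M}=\Bigl\{f\in L^2(0,1)\ :\ f\geq 0\ \text{a.e.},\ \int_0^1 f\,dx=1\Bigr\}, \]
which is a closed convex subset of $L^2(0,1)$ and hence a complete metric space with the $L^2$ distance. Using standard parabolic theory for \eqref{eq} on each of the two phases and joining the solutions at $t=nT+T_{tr}$, I would define the Poincaré map $P:\mathcal{M}\to\mathcal{M}$ by $P\rho_0=\rho(\,\cdot\,,T)$; the maximum principle gives $P\rho_0\geq 0$, integration of \eqref{eq} (together with the no-flux boundary conditions) gives mass conservation, and parabolic smoothing gives $P\rho_0\in L^2$. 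A $T$-periodic solution of \eqref{eq} is then exactly a fixed point of $P$.

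Next I would estimate the difference of two solutions. Let $\rho_1,\rho_2$ be two solutions of \eqref{eq} with initial data in $\mathcal{M}$ and set $v=\rho_1-\rho_2$. Since both mass integrals equal $1$, one has $\int_0^1 v\,dx=0$ for all $t\geq 0$. During a \emph{diffusion} phase ($h\equiv 0$), multiplying $v_t=\sigma v_{xx}$ by $v$ and integrating by parts (the boundary terms vanish because $v_x=0$ at $x=0,1$) yields
\[ \tfrac12\frac{d}{dt}\|v\|_{L^2}^2=-\sigma\|v_x\|_{L^2}^2\leq -\sigma\pi^2\|v\|_{L^2}^2, \]
where the Poincaré--Wirtinger inequality is applied to the mean-zero function $v$. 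Gronwall's lemma over an interval of length $T_{diff}$ gives the contraction factor $e^{-2\pi^2\tau}$ for $\|v\|_{L^2}^2$. During a \emph{ratchet} phase ($h\equiv 1$), $v$ satisfies $v_t=\sigma v_{xx}+(bv)_x$ with the no-flux boundary condition $\sigma v_x+bv=0$ at the endpoints, so the boundary terms in the energy identity again vanish and one obtains, via Cauchy--Schwarz and Young's inequality applied to the transport term,
\[ \tfrac12\frac{d}{dt}\|v\|_{L^2}^2\leq -\sigma\|v_x\|_{L^2}^2+\|b\|_\infty\|v\|_{L^2}\|v_x\|_{L^2}\leq \tfrac{\lambda}{2}\|v\|_{L^2}^2 \]
for a constant $\lambda$ depending only on $\|b\|_\infty$ and $\sigma$ (this is the place where I would match the $\lambda$ of the statement by optimising Young's inequality, possibly sharpening the estimate using the specific structure of $b=\psi'$). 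Gronwall gives the growth factor $e^{\lambda T_{tr}}$ for $\|v\|_{L^2}^2$ over the ratchet phase.

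Composing the two estimates over one full period yields
\[ \|P\rho_0^{(1)}-P\rho_0^{(2)}\|_{L^2}^2\leq e^{-(2\pi^2\tau-\lambda T_{tr})}\,\|\rho_0^{(1)}-\rho_0^{(2)}\|_{L^2}^2, \]
and the hypothesis $2\pi^2\tau-\lambda T_{tr}>\ln 2$ gives a contraction factor strictly smaller than $1/\sqrt{2}$. The Banach fixed point theorem applied to $P$ on $\mathcal{M}$ then produces a unique $\rho^s\in\mathcal{M}$ with $P\rho^s=\rho^s$, which is the desired periodic solution. I expect the only nontrivial step to be pinning down the \emph{sharp} value of $\lambda$ claimed in \cite{kind1}; the contraction structure itself is forced by the Poincaré--Wirtinger gap $\pi^2$ of the Neumann Laplacian on $(0,1)$, which is what produces the $2\pi^2\tau$ on the left-hand side of the hypothesis.
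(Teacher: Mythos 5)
This theorem is not proved in the paper at all; it is imported verbatim (with a citation) from Kinderlehrer--Kowalczyk \cite{kind1}. So what you are really doing is reconstructing the proof from \cite{kind1}, and your reconstruction has the right architecture: a Poincar\'e (period) map on the $L^2$ mass-one cone, an energy estimate on each phase, Gronwall, and Banach's fixed point theorem. The Poincar\'e--Wirtinger gap $\pi^2$ of the Neumann Laplacian on $(0,1)$ is indeed what produces the $2\pi^2\tau$ on the left side of the hypothesis, and the threshold $\ln 2$ corresponds to a one-period contraction factor $<1/\sqrt2$ in $L^2$, which is also what feeds into the quantitative bound $\|\rho^s\|_{H^2}\leq R_0=\frac{\sqrt2+1}{\sqrt2-1}$ stated just after the theorem.

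There is, however, one genuine gap, which you yourself flag: your treatment of the transport term in the ratchet phase,
\[
\Bigl|\int_0^1 b\,v\,v_x\,dx\Bigr|\leq\|b\|_\infty\|v\|_{L^2}\|v_x\|_{L^2}\leq\sigma\|v_x\|_{L^2}^2+\frac{\|b\|_\infty^2}{4\sigma}\|v\|_{L^2}^2,
\]
forces $\lambda$ to scale like $\|b\|_\infty^2/\sigma$, which blows up as $\sigma\to0$ and contradicts the theorem's assertion that $\lambda$ depends only on the potential. This is not a matter of ``sharpening'' a constant; the Cauchy--Schwarz/Young route structurally cannot yield a $\sigma$-free $\lambda$. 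The correct move is to integrate the transport term by parts rather than estimate it crudely. Since $\psi$ is a $C^4$, $1/k$-periodic ratchet potential with maxima at $x_1=0$ and $x_{k+1}=1$, one has $b(0)=\psi'(0)=0=\psi'(1)=b(1)$, so the boundary term vanishes and
\[
\int_0^1 b\,v\,v_x\,dx=\tfrac12\int_0^1 b\,(v^2)_x\,dx=-\tfrac12\int_0^1 b_x\,v^2\,dx,
\]
whence
\[
\tfrac12\frac{d}{dt}\|v\|_{L^2}^2=-\sigma\|v_x\|_{L^2}^2+\tfrac12\int_0^1\psi''\,v^2\,dx\leq\tfrac12\,\|(\psi'')^+\|_\infty\,\|v\|_{L^2}^2,
\]
giving $\lambda=\|(\psi'')^+\|_\infty$, a constant depending only on the potential, exactly as claimed. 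With that replacement your Gronwall--Banach argument closes in the same way and matches the hypothesis $2\pi^2\tau-\lambda T_{tr}>\ln2$.
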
 

It is also known \cite{kind1,kind2} that the periodic orbit $\rho^s$ eventually attracts all the solutions $\rho$ of  \eqref{eq}, namely, \be\lim_{n\to\infty,\,t_n=t+nT}\left\|\rho(\cdot,t_n)-\rho^s(\cdot,t_n)\right\|_{H^2}=0.\ee For brevity, sometimes we will write simply $\rho^s$ or $\rho^s(x)$ for the function $\rho^s(x,0)=\rho^s(x,nT)$. In \cite{kind1} it is shown that \be\|\rho^s\|_{H^2}\leq R_0=\frac{\sqrt 2 +1} {\sqrt 2 -1}.\ee

Denote  $$\hat\rho^s_i=\int\limits_{x_i}^{x_{i+1}}\rho^s(x)\,dx,\ i=1,\dots, k.$$

The main result of the paper is

\begin{theorem} \label{mainthm}
If $a<\frac 1 {2k}$, then there exist $\sigma$, $T_{tr}$ and $T$ such that \be\label{main}\hat\rho^s_1>\hat\rho^s_2>\dots>\hat\rho^s_k.\ee
\end{theorem}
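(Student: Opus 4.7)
The plan is to reduce the analysis of $\rho^s$ to the stationary vector of the discrete Markov chain built in the preceding section, and then transfer the strict monotonicity from the chain to $\hat\rho^s$ by a perturbation argument valid for a suitable choice of parameters. The discrete chain is obtained by pretending that the ratchet phase instantaneously concentrates the mass of each well $(x_i,x_{i+1})$ at the minimum $a_i$, after which the pure-diffusion phase of length $T_{diff}$ spreads this mass according to the Neumann heat kernel $G_N$ on $[0,1]$:
\[
p_{ij}=\int_{x_j}^{x_{j+1}}G_N(a_i,y;T_{diff})\,dy.
\]
The hypothesis $a<\tfrac{1}{2k}$ places $a_i$ strictly closer to $x_i$ than to $x_{i+1}$, so diffusion from $a_i$ delivers strictly more mass into the left-hand neighbour than into the right-hand one, and it is precisely this leftward bias that, for an appropriate range of $\tau=\sigma T_{diff}$, forces the stationary distribution $\pi=(\pi_1,\dots,\pi_k)$ of the chain to satisfy $\pi_1>\pi_2>\dots>\pi_k$. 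I take this strict monotonicity of $\pi$ to be the content of the previous section.

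To transfer it to $\hat\rho^s$, I would view $(\hat\rho^s_1,\dots,\hat\rho^s_k)$ as a fixed point of the one-cycle map acting on mass-per-well and compare it with the fixed-point equation $\pi_j=\sum_i\pi_i p_{ij}$ of the chain. The $H^2$ bound $\|\rho^s\|_{H^2}\leq R_0$ from Theorem \ref{exis}, combined with the exponential relaxation of the ratchet phase to its Boltzmann equilibrium $Z_i^{-1}e^{-\psi/\sigma}$ on each well, shows that, for large $T_{tr}$ and small $\sigma$, the profile $\rho^s(\cdot,T_{tr})$ is close in $L^1$ to a sharply localised peak around each $a_i$ carrying mass $\hat\rho^s_i$. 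Propagating through the diffusion phase with the explicit Neumann heat kernel then yields a quantitative perturbation of the chain,
\[
\hat\rho^s_j=\sum_{i=1}^{k}p_{ij}\,\hat\rho^s_i+\varepsilon_j,\qquad |\varepsilon_j|\leq\eta,
\]
with $\eta=\eta(\sigma,T_{tr},T_{diff})$ that can be driven to zero along a suitable family of parameters (for instance, $T_{tr}\to\infty$ and $\sigma\to 0$ with $\tau$ fixed at a favourable value). Since the discrete chain is irreducible and aperiodic, its invariant probability vector is unique and the associated linear map has a spectral gap on the hyperplane $\sum_i u_i=0$, so the perturbed fixed point $\hat\rho^s$ is forced to lie within $C\eta$ of $\pi$ in $\ell^1$.

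Choosing the parameters so that $C\eta<\tfrac12\min_{i<k}(\pi_i-\pi_{i+1})$ then forces $\hat\rho^s$ to inherit the strict inequalities from $\pi$, which is the content of the theorem. I expect the main obstacle to be this quantitative comparison between the PDE dynamics and the chain: the ratchet phase does not actually produce point masses, and one must estimate, uniformly in the relevant parameters, the leakage of mass between adjacent wells during the ratchet phase itself, the discrepancy between $\rho^s(\cdot,T_{tr})$ and the localised Boltzmann profile, and the propagation of these errors through the diffusion phase. Since the gaps $\pi_i-\pi_{i+1}$ vanish as $a\uparrow\tfrac{1}{2k}$, the error $\eta$ has to be controlled in terms of the asymmetry $\tfrac{1}{2k}-a$, and this balancing is what ultimately pins down the admissible range of $\sigma$, $T_{tr}$ and $T$ appearing in the conclusion.
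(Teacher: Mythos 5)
Your overall strategy is the one the paper actually uses: compare $(\hat\rho^s_1,\dots,\hat\rho^s_k)$ with the stationary vector of the Markov chain $P(\tau)$, show it is an approximate fixed point, and invoke a spectral-gap bound on the hyperplane $\sum u_i=0$ to conclude that $\hat\rho^s$ is close to the stationary vector and inherits its strict monotonicity. The key quantity you call the "spectral gap" is precisely what the paper encodes in $\kappa(P)$, and the resulting inequality is $|\hat\rho^s-\mu^s|\leq|\hat\rho^s-\hat\rho^s P|/\kappa(P)$.

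The genuine gap in your argument is the parameter balancing, and in particular the suggestion to take "$\sigma\to 0$ with $\tau$ fixed at a favourable value." The parameters are coupled: the existence theorem requires $2\pi^2\tau-\lambda T_{tr}>\ln 2$, so $\tau$ cannot stay bounded if $T_{tr}\to\infty$; and the Wasserstein bound \eqref{wass} has a term $R_0(1+c_1)\ln^2 T_{tr}/T_{tr}^2$ that does not vanish unless $T_{tr}\to\infty$. Thus one is forced to take $T_{tr}\to\infty$, hence $\tau\to\infty$, and along such a family the gap of the stationary vector of $P(\tau)$ is not bounded below: by Theorem \ref{mark} it behaves like $c\,e^{-\pi^2\tau}$. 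Your target inequality $C\eta<\frac12\min_i(\pi_i-\pi_{i+1})$ therefore requires $\eta=o(e^{-\pi^2\tau})$, a much stronger statement than "$\eta$ can be driven to zero." Getting this exponential rate is the actual technical content: the paper expands the Neumann heat kernel in cosine modes, observes that the zeroth mode of $\rho^s(\cdot,T_{tr})-\sum_i\hat\rho^s_i\delta_{a_i}$ vanishes exactly (equal total mass), that the modes $m\geq 2$ give $O(e^{-4\pi^2\tau})$, and that the $m=1$ Fourier coefficient tends to zero by the Wasserstein convergence, which downgrades the dominant $e^{-\pi^2\tau}$ contribution to $o(e^{-\pi^2\tau})$. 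Without this mode-by-mode cancellation, a crude $L^1$ or Wasserstein bound on $\eta$ cannot beat the $e^{-\pi^2\tau}$ scale of the gap, and the comparison with $\pi$ collapses. Relatedly, your remark that the error "has to be controlled in terms of the asymmetry $\frac{1}{2k}-a$" points at the wrong small parameter: $a$ is held fixed, and it is the $\tau$-dependence of both the gap and the error that must be tracked.
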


Theorem \ref{mainthm} means, in particular, that, given any initial distribution of density, after a large number of cycles there will be more mass on the left than on the right.

\begin{remark} If  $a>\frac 1 {2k}$, then 
Theorem \ref{mainthm} implies $\hat\rho^s_1<\hat\rho^s_2<\dots<\hat\rho^s_k$, to see this it suffices to make the change of variables $x\to 1-x$.  \end{remark}

The \emph{discrete ratchet} acts a follows. During the ratchet phase it simply concentrates all the matter from any segment $[x_i,x_{i+1}]$ at the point $a_i$. Thus, if $$\mu_i^*=\int\limits_{x_i}^{x_{i+1}}\rho_0(x)\,dx,$$ then at the moment $T_{tr}$ the density becomes $$\sum\limits_{i=1}^k\mu_i^*\delta_{a_i}.$$ During the diffusion phase we have the same diffusion as for the flashing ratchet. Then this process is repeated periodically. 

Denote by $d$ the Wasserstein metric (see e.g. \cite{kind3}) on the space of probability measures on $[0,1]$. For a continuous function $f$ and a probability measure $p$ on $[0,1]$, we use the bra-ket notation as follows: $$\langle p,f\rangle=\int\limits_0^1 f \, dp.$$
The convergence in Wasserstein metric implies *-weak convergence of probability measures, i.e. \be\label{prob}d(p_n,p)\to 0 \Rightarrow \langle p_n-p,f\rangle\to 0,\ f\in C [0,1].\ee 

At the end of the ratchet phase, it is possible to estimate the distance between the solution to \eqref{eq},\eqref{in} and the outcome of the discrete ratchet:

\begin{lemma} (see \cite[Corollary 3]{kind1}) Let $\rho$ be a solution to \eqref{eq},\eqref{in} with $\|\rho_0\|_{H^2}\leq R_0$.
Then for sufficiently large $T_{tr}$ (the lower bound on $T_{tr}$ depends on the potential only) one has \be\label{wass} d\left(\rho(\cdot,T_{tr}), \sum\limits_{i=1}^k\mu_i^*\delta_{a_i}\right)^2\leq R_0(1+c_1)\frac{\ln^2 T_{tr}}{T_{tr}^2}+\min\{C_\lambda\sigma e^{\lambda T_{tr}/2},1\}.\ee The constants $c_1$ and $C_\lambda$ depend on the potential only.\end{lemma}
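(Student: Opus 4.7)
The stated bound is on $d^2$ and has two error terms: a polynomial piece $R_0(1+c_1)\ln^2 T_{tr}/T_{tr}^2$ which should come from how fast the Fokker--Planck flow concentrates mass within each well around its minimum, and a mixed piece $\min\{C_\lambda\sigma e^{\lambda T_{tr}/2},1\}$ which should come from the leakage of mass across the barriers $x_i$ between wells during $[0,T_{tr}]$. My plan is to treat these two contributions separately and assemble them by the triangle inequality in Wasserstein metric, leveraging the fact that on $[0,1]$ the Wasserstein distance is bounded by $1$.

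For the leakage, set $\mu_i(t)=\int_{x_i}^{x_{i+1}}\rho(x,t)\,dx$, so that $\mu_i(0)=\mu_i^*$. Differentiating under the integral and using $b(x_i)=\psi'(x_i)=0$ at the maxima, the flux at $x_i$ reduces to $-\sigma\rho_x(x_i,t)$, hence
\[ |\mu_i(T_{tr})-\mu_i^*|\le\sigma\int_0^{T_{tr}}\bigl(|\rho_x(x_i,s)|+|\rho_x(x_{i+1},s)|\bigr)\,ds. \]
A pointwise bound on $\rho_x$ at the barriers via the embedding $H^2(0,1)\hookrightarrow C^1$, coupled with an $H^2$-growth estimate of the kind $\|\rho(\cdot,t)\|_{H^2}\le Ce^{\lambda t/2}$ that is established in \cite{kind1} en route to Theorem \ref{exis}, should then deliver the factor $\sigma e^{\lambda T_{tr}/2}$. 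The $\min$ with $1$ is automatic because the Wasserstein diameter of $[0,1]$ is $1$.

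For the in-well concentration I would use the explicit one-dimensional formula $d(p,q)=\int_0^1|F_p(x)-F_q(x)|\,dx$, where $F$ is the cumulative distribution function. The target measure $q=\sum_j\mu_j^*\delta_{a_j}$ has $F_q(x)=\mu_1^*+\cdots+\mu_{j-1}^*$ on $[x_j,a_j)$ and jumps by $\mu_j^*$ at $a_j$, so the integral splits well-by-well and reduces to showing that on each $[x_j,x_{j+1}]$ the restriction of $\rho(\cdot,T_{tr})$ has relaxed to a sharply peaked profile around $a_j$, modulo the already-controlled leakage. The Fokker--Planck operator on a single well with no-flux boundary conditions has Gibbs ground state $\pi_j\propto e^{-\psi/\sigma}$ and a spectral gap, but for a useful quantitative rate I would rely on a heat-kernel / Green's function argument that exploits the $H^2$-data $R_0$, producing a bound of order $\ln T_{tr}/T_{tr}$ for the per-well Wasserstein distance, whose square matches the desired first term.

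The main obstacle is extracting exactly the polynomial rate $\ln^2 T_{tr}/T_{tr}^2$: a straightforward spectral-gap argument gives exponential convergence but with pre-factors that blow up as $\sigma\to 0$, while a purely Gaussian/Laplace-method computation around $a_j$ misses the correct logarithmic corrections. The right route appears to be a careful quantitative analysis of the second $x$-derivative of the Neumann heat kernel localised to a single well --- exactly the step at which, according to the introduction, the original argument in \cite{kind1} went astray by using a power-law rather than an exponential decay --- and this is where I would expect to spend the bulk of the effort in a complete proof.
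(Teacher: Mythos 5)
The paper does not prove this lemma at all: it is quoted verbatim from \cite[Corollary 3]{kind1}, and the only thing the paper offers is the citation. So there is no ``paper's own proof'' against which to compare your sketch; the relevant comparison would have to be against the argument in \cite{kind1} itself.

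Assessing your plan on its own terms: the overall decomposition (inter-well leakage giving the $\sigma e^{\lambda T_{tr}/2}$ term, intra-well relaxation giving the $\ln^2 T_{tr}/T_{tr}^2$ term, combined via the triangle inequality in Wasserstein space) is the natural and, I believe, correct skeleton, and the no-flux-at-maxima observation $b(x_i)=0$ together with the $H^2$-growth bound $\|\rho(\cdot,t)\|_{H^2}\lesssim e^{\lambda t/2}$ is indeed the right mechanism for the exponential piece. Two issues, however. First, you write the Wasserstein distance as $d(p,q)=\int_0^1|F_p-F_q|\,dx$, which is the $W_1$ formula; the metric $d$ here is the quadratic Wasserstein metric from \cite{kind3} (Jordan--Kinderlehrer--Otto), where $d^2$ is the optimal transport cost with quadratic ground cost and, in one dimension, $d^2(p,q)=\int_0^1\bigl(F_p^{-1}(u)-F_q^{-1}(u)\bigr)^2\,du$. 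The distinction matters because a leaked mass $\epsilon$ transported distance $\le 1$ contributes at most $\epsilon$ to $W_2^2$ (not $\epsilon^2$), which is exactly what makes the exponential term appear linearly in $\sigma e^{\lambda T_{tr}/2}$; your $W_1$ framing obscures this bookkeeping. Second, and more seriously, you never actually produce the polynomial term $R_0(1+c_1)\ln^2 T_{tr}/T_{tr}^2$ --- you explicitly flag it as ``where I would expect to spend the bulk of the effort.'' That rate (rather than a clean spectral-gap exponential) is the genuinely nontrivial part of the estimate, and a proof proposal that stops short of deriving it, or even of isolating the precise mechanism that gives the logarithmic correction, is not yet a proof. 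As written, your submission is a credible outline of the first half of the argument and an honest admission that the second half is open.
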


Let us describe how the discrete ratchet generates a Markov chain. Consider the heat equation with Neumann boundary conditions:
\be \label{heat}
\left\{\begin{array}{ll} y_s=y_{xx},
& x\in (0,1); s>0,\\
y_{x}=0,& x=0,1; s>0. 
\end{array}\right.
\ee

Let $$ \Gamma_{s}(x)=\frac{\exp(-x^2/4s)}{2\sqrt{\pi s}},$$ and
\be G(x,s)=\sum\limits_{n=-\infty}^{\infty} \Gamma_s(x+2n).\ee
Note that $G$ is $2$-periodic and even in $x$, and $G(1+x,s)=G(1-x,s)$.

The Green function for \eqref{heat} is \cite{pol}
\be \label{gr} \begin{array}{ll} g(\xi,x,s)=1+2\sum\limits_{n=1}^{\infty} \cos(n\pi \xi)\cos(n\pi x)\exp(-n^2\pi^2 s)\\=G(x+\xi,s)+G(x-\xi,s).\end{array}\ee

Now introduce the following matrix:
\be P=P(\tau)=(p_{ij}),\quad p_{ij}=\int\limits_{x_j}^{x_{j+1}} g(a_i,x,\tau)\,dx.\ee

Since the initial distribution of mass between the segments  $[x_i,x_{i+1}]$ is given by the vector $\mu^*=(\mu_i^*)$, the outcome of the discrete ratchet
at the moment $T$ will have the distribution of mass between the segments described by the vector $\mu^* P(\tau)$ (cf. \cite{kind1}), at $2T$ it will be $\mu^* P^2$, and so on. 

\section{The stationary vector of the Markov chain}

In order to prove the main theorem we need first to study the stationary vector of the Markov chain generated by the matrix $P$.

We recall that an $m \times m$-matrix with positive entries is called \emph{ergodic} if the sum of the elements in every row is equal to one. The eigenvalue $1$ of any ergodic matrix $A$ is simple, and there exists a unique vector $\xi$ satisfying \be\xi_i \geq 0,\ i=1,\dots,m,\ \sum\limits_{i=1}^m \xi_i=1, \xi=\xi P.\ee We will call it the \emph{stationary vector} of $A$. Let us also introduce the number $$\kappa(A)=\min_{ \sum\limits_{i=1}^m y_i=0, |y|=1}|yA-y|.$$ Note that, for an ergodic matrix $A$, this number is positive, and $\kappa(A)\to 1$ as all the elements of $A$ approach $1/m$. 

The matrix $P$ is ergodic. 
Denote by $\mu^s=(\mu^s_i)$ its stationary vector, which is also the stationary vector of the corresponding Markov chain. The following result holds.

\begin{theorem} \label{mark}
For $a<\frac 1 {2k}$ and $\tau$ large enough, there is a constant $c>0$ independent of $\tau$ such that \be\label{main1}\mu^s_1\geq\mu^s_2+c e^{-\pi^2\tau},\ \mu^s_2\geq\mu^s_3+ c e^{-\pi^2\tau},\dots,\mu^s_{k-1}\geq\mu^s_k+c e^{-\pi^2\tau}.\ee \end{theorem}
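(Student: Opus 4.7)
My plan is to develop a perturbation expansion of $\mu^s$ around the uniform vector $(1/k,\dots,1/k)$, which is the stationary vector of the rank-one limit $\frac{1}{k}J$, where $J$ denotes the $k\times k$ all-ones matrix. Substituting the Fourier expansion \eqref{gr} into the definition of $p_{ij}$ and integrating term-by-term gives
$$p_{ij}=\frac{1}{k}+\frac{2}{\pi}\cos(\pi a_i)\bigl(\sin(\pi x_{j+1})-\sin(\pi x_j)\bigr)e^{-\pi^2\tau}+O(e^{-4\pi^2\tau}),$$
since the next Fourier mode ($n=2$) contributes $O(e^{-4\pi^2\tau})$. Hence $P=\frac{1}{k}J+e^{-\pi^2\tau}M+E$, with $E$ entrywise $O(e^{-4\pi^2\tau})$ and $M_{ij}=(2/\pi)\cos(\pi a_i)(\sin(\pi x_{j+1})-\sin(\pi x_j))$. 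The rows of $M$ and $E$ sum to zero because $P$ is row-stochastic. Moreover $P\to\frac{1}{k}J$ entrywise as $\tau\to\infty$, so by the property of $\kappa$ recalled above, $\kappa(P)\to 1$.

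Next I would write $\mu^s=\frac{1}{k}\mathbf{1}+w$ with $\sum_iw_i=0$, so that $\mu^s(I-P)=0$ reduces to
$$w(I-P)=\tfrac{1}{k}(\mathbf{1}P-\mathbf{1})=\tfrac{e^{-\pi^2\tau}}{k}\mathbf{1}M+O(e^{-4\pi^2\tau}).$$
Applying the definition of $\kappa(P)$ to the zero-sum vector $w$ yields $|w|\leq\kappa(P)^{-1}|w(I-P)|=O(e^{-\pi^2\tau})$. To extract the leading direction, set $\eta:=\frac{1}{k}\mathbf{1}M$ (whose entries sum to zero since the row sums of $M$ vanish) and write $w=e^{-\pi^2\tau}\eta+\tilde w$. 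Substituting back and using $\eta J=0$ shows that $\tilde w(I-P)$ is $O(e^{-2\pi^2\tau})$, so the same $\kappa$-argument gives $\tilde w=O(e^{-2\pi^2\tau})$. Therefore $w_j=e^{-\pi^2\tau}\eta_j+O(e^{-2\pi^2\tau})$ uniformly in $j$.

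The final step is to pin down the sign of $\eta_j-\eta_{j+1}$. Computing column sums of $M$ gives $\eta_j=(2/(k\pi))\bigl(\sin(\pi x_{j+1})-\sin(\pi x_j)\bigr)\sum_{i=1}^k\cos(\pi a_i)$. Summing the arithmetic progression $a_i=a+(i-1)/k$ yields $\sum_i\cos(\pi a_i)=\sin(\pi/(2k)-\pi a)/\sin(\pi/(2k))$, which is strictly positive precisely when $a<1/(2k)$. A sum-to-product identity simplifies $\sin(\pi x_{j+1})-\sin(\pi x_j)=2\sin(\pi/(2k))\cos(\pi(2j-1)/(2k))$, and a second application shows that $\eta_j-\eta_{j+1}$ is a positive multiple of $\sin(\pi j/k)$ for $j=1,\dots,k-1$, with a constant depending only on $k$ and $a$. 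Combining with the error estimate, $\mu_j^s-\mu_{j+1}^s=e^{-\pi^2\tau}(\eta_j-\eta_{j+1})+O(e^{-2\pi^2\tau})\geq c\,e^{-\pi^2\tau}$ for $\tau$ large enough, as required. The main obstacle is upgrading the crude bound $|w|=O(e^{-\pi^2\tau})$ to an identification of the leading direction, which requires the second $\kappa$-argument on $\tilde w$; the subsequent sign extraction is elementary trigonometry that uses the hypothesis $a<1/(2k)$ in an essential way.
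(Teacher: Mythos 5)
Your proposal is correct, and it takes a genuinely different route than the paper. The paper proves an abstract lemma (Lemma~\ref{crit}) about ergodic matrices whose columns have a certain unimodal ordering along rows and whose column sums strictly decrease; it then constructs a compact convex set of vectors satisfying the desired gap inequalities, shows it is invariant under $y\mapsto yP$, and invokes Brouwer's fixed point theorem to locate the stationary vector inside it. You instead carry out a direct perturbation expansion of $P$ around the rank-one matrix $\frac{1}{k}J$, solve $\mu^s(I-P)=0$ to leading order in $e^{-\pi^2\tau}$, and identify $\mu^s_j-\mu^s_{j+1}$ explicitly via trigonometric sums, with the hypothesis $a<1/(2k)$ entering through the sign of $\sum_i\cos(\pi a_i)=\sin(\pi/(2k)-\pi a)/\sin(\pi/(2k))$. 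I checked the key steps: the Fourier expansion of $p_{ij}$ and the form of $M$; that $\eta$ and $\tilde w$ are zero-sum so the inequality $|y|\le\kappa(P)^{-1}|y(I-P)|$ applies (with $\kappa(P)\to 1$ because $P\to\frac{1}{k}J$); the bootstrapping to $\tilde w=O(e^{-2\pi^2\tau})$; and the final product formula $\eta_j-\eta_{j+1}=\frac{8}{k\pi}\sin(\pi/(2k)-\pi a)\sin(\pi/(2k))\sin(\pi j/k)>0$, which yields a uniform $c$ by minimizing over $j$. What your approach buys is an explicit sharp leading-order expression for the gaps $\mu^s_j-\mu^s_{j+1}$, whereas the paper's lemma is more robust (it does not need the explicit Fourier structure, only sign and monotonicity information) and reusable for other discretizations; what the paper's approach buys is avoidance of the iterated $\kappa$-bound and the trigonometric bookkeeping. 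Both are valid proofs of the theorem.
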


%Recall somewhere what is ergodic matrix i nalichii u nih odnogo statsionarnogo vectora
%poyasnit, chto \rho^s= \rho^s(0)= \rho^s(T) ili voobche ne vvodit obozn. dlya drugih t

Its proof requires
\begin{lemma} \label{crit} Let $A=(a_{ij})$ be an ergodic $m \times m$-matrix satisfying the following conditions: a) for any column (say, $j$-th, $j<m$) there exists a number $s=s(j)$ so that one has $a_{ij}\leq a_{i,j+1}$ provided $i> s$, and $a_{ij} \geq 
a_{i,j+1}$ provided $i< s$, b) there exists a constant $d>0$ such that $$ \bar A_1\geq \bar A_2+d,\ \bar A_2\geq \bar A_3+ d,\dots,\bar A_{m-1}\geq\bar A_m+d,$$ where $\bar A_j$ is the sum of elements in the $j$-th column. Then the stationary vector $\xi$ of $A$ satisfies
\be \xi_1\geq \xi_2+Md,\ \xi_2\geq \xi_3+ Md,\dots, \xi_{m-1}\geq \xi_m+Md,\ee where $M$ is the minimum of the elements of the last (i.e. $m$-th) column.

\end{lemma}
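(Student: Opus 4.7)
My plan is to combine an Abel summation identity with an iterative monotonicity argument. Fix $j \in \{1,\dots,m-1\}$ and write $b_i = a_{ij} - a_{i,j+1}$, $B_i = \sum_{k=1}^i b_k$, $B_0 = 0$. For any vector $\pi=(\pi_i)_{i=1}^m$, a direct Abel summation gives the identity
\[
\sum_{i=1}^m \pi_i b_i \;=\; \pi_m B_m + \sum_{i=1}^{m-1} (\pi_i - \pi_{i+1})\, B_i.
\]
Three elementary facts are available: (i) $B_m = \bar A_j - \bar A_{j+1} \geq d$ by hypothesis (b); (ii) $B_i \geq 0$ for every $i = 0,\dots,m$, because by (a) the partial sums consist only of nonnegative summands when $i \leq s-1$, while for $i \geq s$ the relation $B_i = B_m - \sum_{k>i} b_k$ combined with $b_k \leq 0$ for $k > s$ forces $B_i \geq B_m \geq d$; and (iii) $\xi_m = \sum_i \xi_i a_{im} \geq M \sum_i \xi_i = M$ from stationarity of $\xi$ and the definition of $M$.

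I would first use the identity to show that the map $\pi \mapsto \pi A$ preserves weak monotonicity of probability vectors. If $\pi$ satisfies $\pi_i \geq \pi_{i+1}$ for every $i$, then for each $j$
\[
(\pi A)_j - (\pi A)_{j+1} \;=\; \sum_i \pi_i b_i \;\geq\; \pi_m B_m \;\geq\; 0
\]
by the identity and facts (i)--(ii). Starting from the uniform distribution $u = (1/m,\dots,1/m)$, which is weakly decreasing, every iterate $uA^k$ is therefore weakly decreasing; since $A$ is ergodic, $uA^k \to \xi$, so the limit $\xi$ satisfies $D_i := \xi_i - \xi_{i+1} \geq 0$ for all $i$.

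Applying the Abel identity one more time to $\pi = \xi$ and using stationarity $\xi = \xi A$, the left side becomes $D_j$, and the identity reads
\[
D_j \;=\; \xi_m B_m + \sum_{i=1}^{m-1} B_i\, D_i.
\]
Dropping the nonnegative sum (by $D_i \geq 0$ from the previous step and $B_i \geq 0$ from (ii)), and combining (i) with (iii), one obtains $D_j \geq \xi_m B_m \geq Md$, which is exactly the claim. The main obstacle is the monotonicity-preservation step: the sign-change index $s=s(j)$ varies with $j$ and the sign of $b_s$ is not fixed by (a), so one must verify $B_i \geq 0$ on both sides of the crossing --- precisely what allows the Abel estimate to close independently of where the sign change occurs in column $j$.
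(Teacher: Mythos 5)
Your proof is correct, and it differs from the paper's in two genuine ways. The paper sets up the compact convex set
\[
B=\Bigl\{y:\textstyle\sum_i y_i=1,\;y_1\geq y_2+Md,\;\dots,\;y_{m-1}\geq y_m+Md,\;y_m\geq M\Bigr\},
\]
proves it is invariant under $y\mapsto yA$ by the single comparison inequality
$\sum_i y_i(a_{il}-a_{i,l+1})\geq y_{s(l)}\sum_i(a_{il}-a_{i,l+1})$
(valid because $y$ is weakly decreasing and $b_i$ changes sign at $s$), and invokes Brouwer's fixed point theorem to locate $\xi$ in $B$ directly. You instead reformulate that same comparison as an Abel-summation identity, establish first only qualitative weak monotonicity of $\xi$ by iterating the uniform distribution and passing to the Perron--Frobenius limit, and then recover the quantitative $Md$-gap by a second application of the identity to $\xi$ itself. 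The two core algebraic devices are equivalent (your $\sum_i(\pi_i-\pi_{i+1})B_i\geq 0$ is exactly the paper's $\sum_i(y_i-y_{s})b_i\geq 0$ in disguise), and both proofs ultimately hinge on $B_i\geq 0$ and $\xi_m\geq M$. What you buy is a slightly more elementary route: no Brouwer, only convergence of an ergodic chain, at the mild cost of splitting the argument into a qualitative step and a quantitative step rather than the paper's one-shot invariant-set argument. Your handling of the sign at $i=s$ (using $B_{s-1}\geq 0$ from the nonnegative partial sums and $B_s\geq B_m\geq d$ from the nonpositive tail, so $b_s$'s sign never matters) is exactly the right way to close that potential gap.
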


\begin{proof} Consider the set \be \notag B=\left\{y\in\R^m\,\middle|\, \sum\limits_{i=1}^m y_i=1, y_1\geq y_2+Md,y_2\geq y_3+ Md,\right.\ee \be \notag
 \left.\phantom{\sum\limits_{i=1}^m}\dots, y_{m-1}\geq y_m+Md,y_m\geq M\right\}. \ee 
This set is compact and convex. Moreover, $B$ is invariant for the map $\mathcal{A}:y\mapsto yA$. In fact, let $y\in B$. Then  $$\sum\limits_{i=1}^m \sum\limits_{j=1}^m y_j a_{ji}=\sum\limits_{j=1}^m  y_j \sum\limits_{i=1}^m a_{ji}=1.$$ Fix any $l=1,\dots,m-1$. Then $$(yA)_l-(yA)_{l+1}=\sum\limits_{i=1}^m y_i (a_{il}-a_{i,l+1})\geq \sum\limits_{i=1}^m y_{s(l)} (a_{il}-a_{i,l+1})$$ $$= y_{s(l)} (\bar A_l-\bar A_{l+1})\geq y_{s(l)} d\geq Md.$$  
Finally, $$\sum\limits_{i=1}^m y_i a_{im}\geq M\sum\limits_{i=1}^m y_i=M.$$

By Brouwer's fixed point theorem, $\mathcal{A}$ has a fixed point in $B$, which should coincide with the stationary vector. 
\end{proof}

\begin{proof} (Theorem \ref{mark}) Let us show that $P$ satisfies the conditions of Lemma \ref{crit}. Set $s=\left[\frac {k+1} 2\right]$ (the integer part) for any $j$. Firstly, let us check if $p_{ij} \geq 
p_{i,j+1}$ for $i< s$. Consider the function $$\phi(y)=\int\limits_{y}^{y+\frac 1 k} g(a_i,x,\tau)\,dx,\ 0\leq y\leq \frac {k-1}k.$$ It suffices to show that it is decreasing.
Observe that $a_i<\frac 1 2$ and $\cos (\pi a_i)>0$ since $i<s$. Now, $$\phi^\prime(y)=g(a_i,y+1/k,\tau)-g(a_i,y,\tau)$$ $$=2\sum\limits_{n=1}^{\infty} \cos(n\pi a_i)(\cos(n\pi y+{n\pi}/ k)-\cos(n\pi y))\exp(-n^2\pi^2 \tau)$$
$$=2\cos(\pi a_i)(\cos(\pi y+\pi/ k)-\cos(\pi y))\exp(-\pi^2 \tau)+o(e^{-\pi^2\tau})$$
$$\leq2\cos(\pi a_i)(\cos(\pi/ k)-1)\exp(-\pi^2 \tau)+o(e^{-\pi^2\tau})\leq 0$$ for large $\tau$.

The claim that $p_{ij}\leq p_{i,j+1}$ for $i> s$ can be proven similarly taking into account that $a_i>\frac 1 2$ and $\cos (\pi a_i)<0$ for $i>s$.

Without loss of generality (i.e. for large $\tau$) we may assume that the function $G(x,\tau)$ is decreasing in $x$ on the segment $[a,1-a]$, and \be\label{este} G_x(x,\tau)\leq -Ce^{-\pi^2\tau}, a\leq x\leq 1-a,\ee with some constant $C>0$. Really, from \eqref{gr} we get the following representation:

\be G(x,\tau)=\frac 1 2 +\sum\limits_{n=1}^{\infty} \cos(n\pi x)\exp(-n^2\pi^2 \tau).\ee Thus,

$$ G_x(x,\tau)=-\sum\limits_{n=1}^{\infty} n\pi\sin(n\pi x)\exp(-n^2\pi^2 \tau)$$ $$=-\pi\sin(\pi x)\exp(-\pi^2 \tau)+o(e^{-\pi^2\tau})$$
$$\leq -\pi\sin(\pi a)\exp(-\pi^2 \tau)+o(e^{-\pi^2\tau})\leq -Ce^{-\pi^2\tau}.$$

Take any $l=1,\dots,m-1$. We have to see that \be\label{usl}  \bar P_l\geq \bar P_{l+1}+d\ee with some $d$ independent of $l$. Consider the function $$\varphi(y)=\sum\limits_{i=1}^k\int\limits_{y}^{y+\frac 1 k} g(a_i,x,\tau)\,dx,\ 0\leq y\leq \frac {k-1}k.$$ Then $$\varphi^\prime(y)=\sum\limits_{i=1}^k [g(a+(i-1)/k,y+1/k,\tau)-g(a+(i-1)/k,y,\tau)]$$ $$=\sum\limits_{i=1}^k [G(a+y+i/k,\tau)+G(a-y+(i-2)/k,\tau)$$ $$-G(a+y+(i-1)/k,\tau)-G(a-y+(i-1)/k,\tau)]$$ $$=G(a+y+1,\tau)+G(a-y-1/k,\tau)-G(a+y,\tau)-G(a-y+(k-1)/k,\tau)$$ $$=G(1-a-y,\tau)+G(y-a+1/k,\tau)-G(a+y,\tau)-G(a-y+(k-1)/k,\tau).$$

The length of the segments $[a+y,y-a+1/k]$ and $[a-y+(k-1)/k,1-a-y]$ is $\frac 1 k -2a$. Therefore \eqref{este} implies \be\varphi^\prime(y)\leq C(4a-2/ k)e^{-\pi^2\tau}.\ee Thus, \eqref{usl} holds with $$d=\frac {C(-4a+2/ k)e^{-\pi^2\tau}}{k}.$$ It remains to observe that due to \eqref{gr} one has \be M=\frac 1k+ O(e^{-\pi^2\tau})\ee  for the minimum of the last column of $P$.

\end{proof}

\section{Proof of the main theorem}

\begin{proof}(Theorem \ref{mainthm}) Observe that
 there exist sequences $$T_{tr,n}\to \infty, \tau_n\to\infty, \sigma_n\to 0$$ satisfying \be\sigma_n e^{\lambda T_{tr,n}/2}\to 0\ee and
  \be2\pi^2 \tau_n -\lambda T_{tr,n}> \ln 2.\ee Let $T_n=T_{tr,n}+\frac {\tau_n}{\sigma_n}$. Then we can find the corresponding $T_n$-periodic solutions $\rho^{s,n}$, and introduce obvious notations $\hat\rho^{s,n}$, $\mu^{s,n}$ etc. for the corresponding values. It suffices to show that \be|\hat\rho^{s,n}-\mu^{s,n}|= o (e^{-\pi^2\tau_n})\ee as $n\to \infty$. In this case, setting $\tau=\tau_n, T_{tr}=T_{tr,n}$ and $T=T_n$ for $n$ large enough, we would get \eqref{main} from \eqref{main1}.
  
 % Let $\mu^{*s,n}$ be the initial distribution vector $\mu^*$ corresponding to the $n$-th periodic solution $\rho^{s,n}$. 
 Let $$\rho^{*s,n}(x,t)= \sum\limits_{i=1}^k\hat\rho^{s,n}_i g(a_i,x,\sigma(t-T_{tr,n})),\quad t>T_{tr,n}.$$ Then $$
 |\hat\rho^{s,n}_j-(\hat\rho^{s,n}P(\tau_n))_j|$$ $$=\left|\int\limits_{x_j}^{x_{j+1}}\rho^{s,n}(x,T_n)-\rho^{*s,n}(x,T_n)\,dx\right|$$ $$=\left|\int\limits_{x_j}^{x_{j+1}}\left\langle\rho^{s,n}(\cdot,T_{tr})- \sum\limits_{i=1}^k\hat\rho_i^{s,n}\delta_{a_i}, g(\cdot,x,\tau_n)\right\rangle\,dx\right |
 $$ \be\label{ner}\begin{array}{c}=\left | \int\limits_{x_j}^{x_{j+1}}\left\langle\rho^{s,n}(\cdot,T_{tr})- \sum\limits_{i=1}^k \hat \rho_i^{s,n} \delta_{a_i} , 1 \right\rangle \right.
  \\ +\left. 2\left\langle\rho^{s,n}(\cdot,T_{tr})- \sum\limits_{i=1}^k\hat\rho_i^{s,n}\delta_{a_i},\cos(\pi
   \cdot)\right\rangle\cos(\pi x)e^{-\pi^2 \tau_n}\right.\\+\left. 2\sum\limits_{m=2}^{\infty} \left\langle\rho^{s,n}(\cdot,T_{tr})-
    \sum\limits_{i=1}^k\hat\rho_i^{s,n}\delta_{a_i}, \cos(m\pi \cdot)\right\rangle\cos(m\pi x)e^{-m^2\pi^2 \tau_n}\,dx\right
    |,\end{array}\ee $$\ j=1,\dots,n.$$ The first summand is zero, the third is $o (e^{-\pi^2\tau_n})$. Due to \eqref{prob} and \eqref{wass} with $\rho(x,t)=\rho^{s,n}(x,t)$, $\mu^*=\hat\rho^{s,n}$, one has
    $$\left\langle\rho^{s,n}(\cdot,T_{tr})- \sum\limits_{i=1}^k\hat\rho_i^{s,n}\delta_{a_i},\cos(\pi
   \cdot)\right\rangle\to 0,$$ so the second summand from \eqref{ner} is also $o (e^{-\pi^2\tau_n})$ as $n\to \infty$. Thus, \be
 |\hat\rho^{s,n}-\hat\rho^{s,n}P(\tau_n)|=o (e^{-\pi^2\tau_n}).\ee It remains to observe that \be|\hat\rho^{s,n}-\mu^{s,n}|\leq \frac{ |\hat\rho^{s,n}-\hat\rho^{s,n}P(\tau_n)|}{\kappa(P(\tau_n))},\ee and $\kappa(P(\tau_n))\to 1$.
 
 \end{proof}

\end{document}